\def\qed{\hfill {\hbox{${\vcenter{\vbox{               
   \hrule height 0.4pt\hbox{\vrule width 0.4pt height 6pt
   \kern5pt\vrule width 0.4pt}\hrule height 0.4pt}}}$}}}
\def\utr{\underline\triangleright}
\def\otr{\overline\triangleright}
\newtheorem{theorem}{Theorem}
\theoremstyle{definition}
\newtheorem{example}{Example}
\newtheorem{definition}{Definition}
\newtheorem{remark}{Remark}
\date{}
\title{\Large \textbf{MC-Biquandles and MC-Biquandle Coloring Quivers}}
\author{
Seonmi Choi\footnote{Email: smchoi@knu.ac.kr. Partially supported by Basic Science Research Program through the National Research Foundation of Korea (NRF) funded by the Ministry of Education (No. 2021R1I1A1A01049100) and the National Research Foundation of Korea (NRF) grant funded by the Korean government (MSIT) (No. 2022R1A5A1033624).}
 \and
Sam Nelson\footnote{Email: Sam.Nelson@cmc.edu. Partially supported by
Simons Foundation Collaboration Grant 702597.}}
\begin{document}
\maketitle

\begin{abstract}
We introduce the notion of \textit{mc-biquandles}, algebraic structures 
which have possibly distinct biquandle operations at single-component and 
multi-component crossings. These structures provide computable homset 
invariants for classical and virtual links. We categorify these homsets 
to obtain \textit{mc-biquandle coloring quivers} and define several new link 
invariants via decategorification from these invariant quivers.
\end{abstract}

\parbox{6in} {\textsc{Keywords:} mc-biquandles, coloring quivers, 
link invariants, categorification

\smallskip

\textsc{2020 MSC:} 57K12}

\section{\large\textbf{Introduction}}\label{I}

\textit{Biquandles} are algebraic structures with the useful property that
the set of biquandle homomorphisms, known as \textit{homsets} can be used to define
invariants of oriented knots and links. Elements of a biquandle homset 
$\mathrm{Hom}(\mathcal{B}(L),X)$ from the \textit{fundamental biquandle} of 
an oriented knot or link $L$ to a finite biquandle $X$ can be 
visualized as \textit{biquandle colorings} of a choice of diagram $D$ for $L$,
analogously to representing a linear transformation as a matrix by making 
a choice of bases for the input and output spaces.

In \cite{NOS}, the biquandle structure was extended to the case of 
\textit{psyquandles} in order to define coloring invariants for pseudoknots and
singular knots. This extension involved 
using the standard biquandle operations at classical crossings 
and introducing new operations at the precrossings or singular crossings.
In this paper, we use a similar approach to strengthen biquandle
counting invariants for multi-component links. More precisely, we define 
different biquandle-style operations at multi-component crossings and at
single-component crossings and call the resulting structure \textit{mc-biquandles}. 
An mc-biquandle can be understood as a pair consisting of a standard biquandle
for use at single-component crossings and a birack for use at multi-component
crossings with some interaction conditions as required by Reidemeister III moves
with mixed crossing types. Our construction does not
depend on the genus of the supporting surface for the link diagram, and
hence applies to the case of virtual links as well.

In \cite{CN}, a directed graph or \textit{quiver} structure was defined on 
the homset $\mathrm{Hom}(\mathcal{Q}(L),X)$ of homomorphisms from the
knot quandle to a finite target quandle $X$, which is invariant under Reidemeister
moves. Since quivers are categories, this construction is a kind of 
categorification of the quandle homset invariant. In later papers, such
as \cite{CCN}, the quiver construction was extended to other 
algebraic homset invariants. In this paper, we expand upon this concept to 
apply it to our novel structure of mc-biquandles, providing categorifications 
of the mc-biquandle homset invariants. Moreover, we define new polynomial 
and multiset-valued invariants of classical and virtual links based on these
quivers as decategorifications.

The paper is organized as follows. In Section \ref{MCB}, we define 
\textit{mc-biquandles} with different operations at single-component and 
multi-component crossings. In Section \ref{MCBQ}, we categorify
this structure to obtain mc-biquandle coloring quivers. 
Our main results are that these new algebraic structures define invariants
of oriented classical and virtual knots and links.
Throughout the paper 
we provide examples to illustrate the 
computations of the invariants and to show some tables of results for examples 
of invariants in this new infinite family. We conclude in Section \ref{Q} with 
some questions for future research.

\section{\large\textbf{Biquandles and MC-Biquandles}}\label{MCB}

We begin by recalling a definition (see \cite{EN} for more).

\begin{definition} 
A \textit{biquandle} is a set $X$ with a pair of binary 
operations $\utr,\otr$  satisfying
\begin{itemize}
\item[(i)] For all $x\in X$, $x\otr x=x\utr x$,
\item[(ii)] For all $x,y\in X$, the maps 
$\alpha_y,\beta_y:X\to X$
and $S:X\times X\to X\times X$ defined by
\[\alpha_y(x)=x\utr y, \
\beta_y(x)=x\otr y, \ \mathrm{and~} 
S(x,y)=(y\otr x,x\utr y)
\] are invertible, and
\item[(iii)] For all $x,y,z\in X$, we have the 
\textit{exchange laws}
\[\begin{array}{rcl}
(x\otr y)\otr (z\otr y) & = & (x\otr z)\otr (y\utr z) \\
(x\utr y)\otr (z\utr y) & = & (x\otr z)\utr (y\otr z) \\
(x\utr y)\utr (z\utr y) & = & (x\utr z)\utr (y\otr z) \\
\end{array}.\]
\end{itemize}
A set $X$ with binary operations satisfying axioms (ii) and (iii) is called a 
\textit{birack}.
\end{definition}

The biquandle axioms are the conditions required by the Reidemeister moves
so that for every \textit{biquandle coloring} of an oriented knot or link
diagram, i.e., an assignment of elements of $X$ to the semiarcs in a diagram 
such that the condition
\[\includegraphics{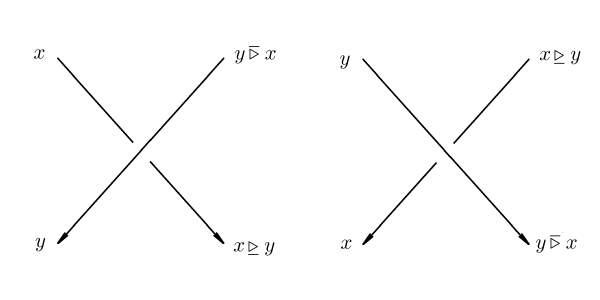}\]
holds at every crossing, there exists a unique biquandle coloring of the 
diagram after the move which agrees with the coloring before the move 
outside the neighborhood of the move.

We next observe that in an oriented link diagram, crossings fall into two
disjoint classes: \textit{single-component}, where both strands in the 
crossing are from the same component of the link, and \textit{multi-component},
where the two strands in the crossing are from different components of the link.
Moreover, crossings cannot switch between these sets during Reidemeister moves.
We can then generalize the biquandle structure to have distinct operations 
at single-component crossings and multi-component crossings.

\begin{definition}\label{Defmcbiqdle} 
An \textit{mc-biquandle} is a set $X$ with two pairs of binary 
operations $\utr^s,\otr^s,\utr^m$ and $\otr^m$ satisfying
\begin{itemize}
\item[(i)] For all $x\in X$, $x\otr^s x=x\utr^s x$,
\item[(ii)] For all $x,y\in X$ and $j\in \{s,m\}$, the maps 
$\alpha^j_y,\beta^j_y:X\to X$
and $S^j:X\times X\to X\times X$ defined by
\[\alpha^j_y(x)=x\utr^j y, \
\beta^j_y(x)=x\otr^j y, \ \mathrm{and~} 
S^j(x,y)=(y\otr^j x,x\utr^j y)
\] are invertible, and
\item[(iii)] For all $x,y,z\in X$ and 
$(j,k,l)\in\{(s,s,s),(s,m,m), (m,s,m),(m,m,s),(m,m,m)\}$, we have the 
\textit{multi-component exchange laws}
\[\begin{array}{rcl}
(x\otr^k y)\otr^l (z\otr^j y) & = & (x\otr^l z)\otr^k (y\utr^j z) \\
(x\utr^l y)\otr^j (z\utr^k y) & = & (x\otr^j z)\utr^l (y\otr^k z) \\
(x\utr^k y)\utr^j (z\utr^l y) & = & (x\utr^j z)\utr^k (y\otr^l z) \\
\end{array}.\]
\end{itemize}
\end{definition}

Let us label single-component crossings with $s$ and multi-component crossings
with $m$. Then an \textit{mc-biquandle coloring} of a classical or virtual knot
or link diagram is an assignment of elements of $X$ to the semiarcs in the 
diagram such that at every crossing we have the following:

\[\includegraphics{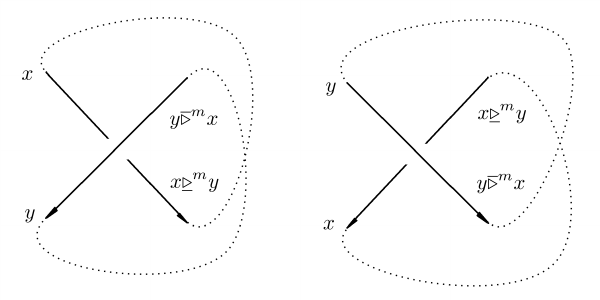}\]
\[\includegraphics{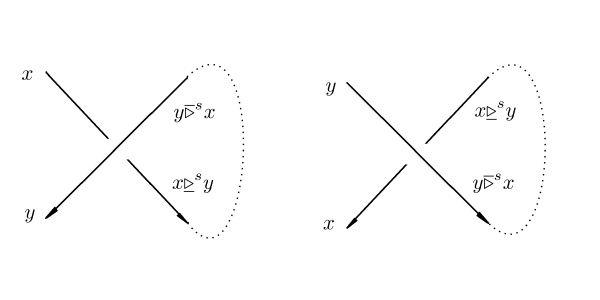}\]

We then observe that Reidemeister I moves involve 
only one single-component crossing, 
\[\includegraphics{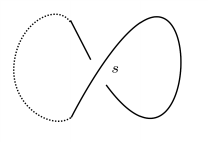}\]
Reidemeister II moves involve two single-component
or two multi-component crossings, 
\[\includegraphics{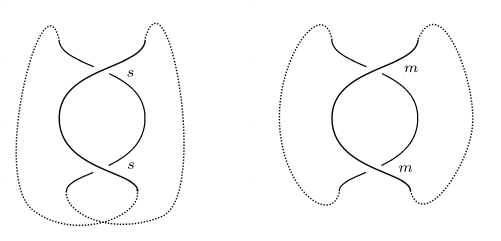}\]
and Reidemeister III moves can involve 
three single-component crossings, two multi-component crossings and one single-component 
crossing, or three multi-component crossings:
\[\includegraphics{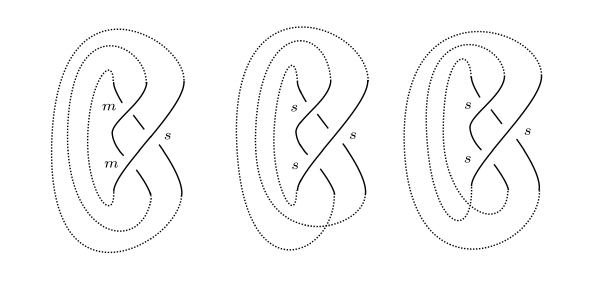}\]
\[\includegraphics{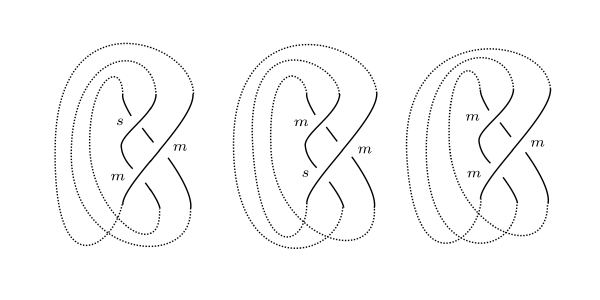}\]
Then using the principle that for each labeling
of semiarcs on one side of a Reidemeister move, there should be a unique 
labeling of semiarcs on the other side of the move which agrees with the
labeling before the move on the boundary of the neighborhood of the move,
we obtain the mc-biquandle axioms. For example, the Reidemeister III move
yields the multi-component exchange laws:
\[\includegraphics{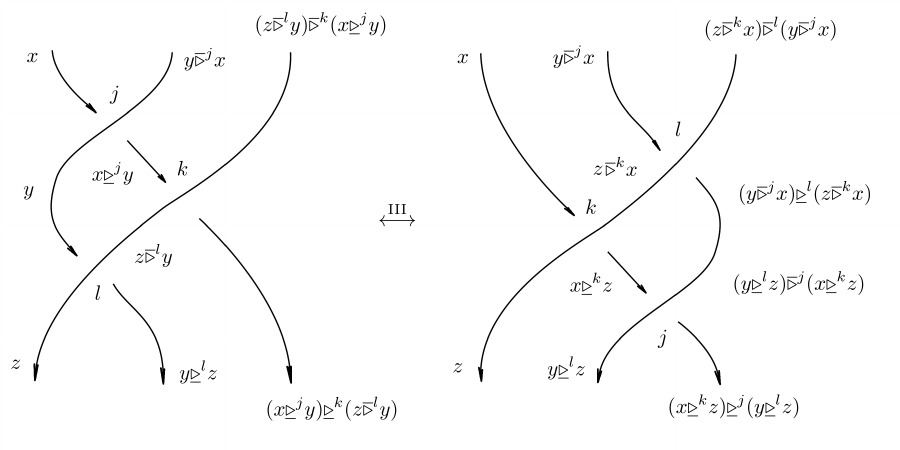}\]

\begin{remark}
Note that these operations are defined not on the geometric semiarcs themselves, 
but on the algebraic coloring structure used to color the diagram. While it is 
true that at any given crossing the strands involved are both single-component 
or both multi-component, the \textit{colors} used to label them are neither
fundamentally single-component nor multi-component, so it makes sense to 
define both kinds of operations on the same set. This setup is similar to 
many other structures in the literature including \textit{parity biquandles}
in \cite{KK, KN}, \textit{virtual (bi)quandles} in \cite{KM}, and
\textit{psyquandles} in \cite{NOS}. In particular, the mc-biquandle operations
are not defined geometrically but purely combinatorially. We do not have a good
geometric interpretation for these structures, but the same is true of standard
biquandles, psyquandles and many other coloring structures.
\end{remark}

\begin{remark}
Let $X$ be an mc-biquandle. If $\utr^m=\utr^s$ and $\otr^m=\otr^s$, 
then $X$ is a biquandle in the usual sense. More generally, an mc-biquandle 
is a pair consisting of a biquandle structure $\utr^s, \otr^s$
on a set $X$ and a birack structure $\utr^m,\otr^m$ on the same set which
are compatible in the way described by axiom (iii).
\end{remark}

\begin{example}\label{ex:te}
Let $X$ be a biquandle with two binary operations $\utr, \otr$ 
and set the single-component operations $\utr^s, \otr^s$ to be those of 
$X$ and the multi-component operations $\utr^m, \otr^m$ to be trivial, i.e.
\[x\utr^sy=x\utr y,\quad x\otr^s y=x\otr y, \quad
x\utr^my=x,\quad \mathrm{and}\quad x\otr^m y=x.\]
Then one can easily check that the mc-biquandle axioms are satisfied; let us
call the resulting mc-biquandle a \textit{trivial extension} of $X$.
The invariants we define below are insensitive to crossing changes at 
multicomponent crossings if they use this type of mc-biquandle, a sort of 
opposite situation from link homology where single-component crossing changes 
are permitted but not multi-component ones. Colorings by trivial extension 
also cannot distinguish a link $L$ from the virtual link obtained by
virtualizing (in the sense of making virtual) all classical multicomponent 
crossings.
\end{example}

\begin{example}
We can specify an mc-biquandle structure on a finite set $X=\{1,2,\dots, n\}$
with operations tables for each of the four operations, e.g.
\[
\begin{array}{r|rrr}
\utr^s & 1 & 2 & 3 \\ \hline
1 & 1 & 1 & 1 \\
2 & 2 & 2 & 2 \\
3 & 3 & 3 & 3
\end{array}
\quad
\begin{array}{r|rrr}
\otr^s & 1 & 2 & 3 \\ \hline
1 & 1 & 3 & 1 \\
2 & 2 & 2 & 2 \\
3 & 3 & 1 & 3
\end{array}
\quad
\begin{array}{r|rrr}
\utr^m & 1 & 2 & 3 \\ \hline
1 & 3 & 3 & 3 \\
2 & 2 & 2 & 2 \\
3 & 1 & 1 & 1 \\
\end{array}
\quad
\begin{array}{r|rrr}
\otr^m & 1 & 2 & 3 \\ \hline
1 & 1 & 1 & 1 \\
2 & 2 & 2 & 2 \\
3 & 3 & 3 & 3.
\end{array}
\]
We can compactify this notation by writing the four
operation tables as a block matrix of the format
$[\utr^s | \otr^s | \utr^m| \otr^m]$, e.g.,
\[
\left[
\begin{array}{rrr|rrr|rrr|rrr}
 1 & 1 & 1  & 1 & 3 & 1 &  3 & 3 & 3 &  1 & 1 & 1 \\
 2 & 2 & 2  & 2 & 2 & 2 &  2 & 2 & 2 &  2 & 2 & 2 \\
 3 & 3 & 3  & 3 & 1 & 3 &  1 & 1 & 1 &  3 & 3 & 3
\end{array}\right].
\]
\end{example}

\begin{definition}
Let $D$ be a diagram of an oriented link $L$. The \textit{fundamental mc-biquandle}
of $L$, denoted $\mathcal{MCB}(D)$, is defined in the following way:
\begin{itemize}
\item Each semiarc in $D$ has a corresponding generator $x_j$,
\item An \textit{mc-biquandle word} is a generator $x_j$ or an expression
of one of the forms
\[\{w_j\utr^lw_k,\ 
w_j\otr^lw_k,\ 
(\alpha_{w_j}^{l})^{-1}(w_k),\
(\beta_{w_j}^{l})^{-1}(w_k),\
(S^l)^{-1}_i(w_j,w_k)\}\]
where $l\in\{s,m\}$, $w_j,w_k$ are mc-biquandle words and 
$i\in\{1,2\}$ (and $S_1(x,y)=x, S_2(x,y)=y$),
\item The elements of $\mathcal{MCB}(D)$ are equivalence classes of 
mc-biquandle words under the congruence
generated by the mc-biquandle axioms and the crossing relations. 
\end{itemize}
\end{definition}


A presentation for an mc-biquandle consists of the generating set $S$ and the set of relations $R$, that is, it is the quotient of the free mc-biquandle on the set $S$ by equivalence relation generated by $R$.
It follows that the mc-biquandle $\mathcal{MCB}(D)$ has a presentation 
$$\langle ~ x_{1}, \cdots, x_{n} ~|~ 
r_{1}^{s}(c_1), r_{2}^{s}(c_1), \cdots, r_{1}^{s}(c_{n_{s}}), r_{2}^{s}(c_{n_{s}}), 
r_{1}^{m}(d_1), r_{2}^{m}(d_1), \cdots, r_{1}^{m}(d_{n_{m}}), r_{2}^{m}(d_{n_{m}}) ~\rangle$$
where $x_{1}, \cdots, x_{n}$ are the generators of the semiarcs of $D$, 
$c_1, \cdots, c_{n_{s}}$ are single-component crossings of $D$, 
$d_1, \cdots, d_{n_{m}}$ are multi-component crossings of $D$, and 
the relations $r_{1}^{s}(c_k), r_{2}^{s}(c_k), r_{1}^{m}(d_{k'}), r_{2}^{m}(d_{k'})$ are the crossing relations defined as 
\[r_{1}^{s}(c_k): x_{k_{3}}=x_{k_{1}}\utr^{s}x_{k_{2}}, \
r_{2}^{s}(c_k): x_{k_{4}}=x_{k_{2}}\otr^{s}x_{k_{1}}, \
r_{1}^{m}(d_{k'}): x_{k'_{3}}=x_{k'_{1}}\utr^{m}x_{k'_{2}}\ \mathrm{and}\ 
r_{2}^{m}(d_{k'}): x_{k'_{4}}=x_{k'_{2}}\otr^{m}x_{k'_{1}}\]
where the crossings are of the form
\[\includegraphics{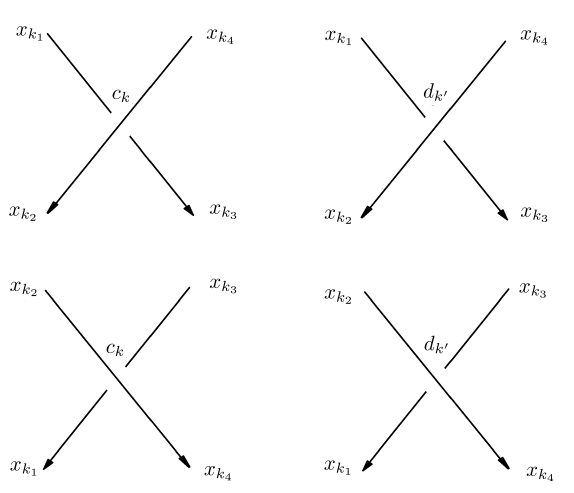}.\]
 
\begin{theorem}
Let $D$ and $D'$ be two diagrams which represent the same link $L$. 
Then the mc-biquandle $\mathcal{MCB}(D)$ is isomorphic to the mc-biquandle $\mathcal{MCB}(D')$.
\end{theorem}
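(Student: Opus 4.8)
The plan is to show that $\mathcal{MCB}(D)$ depends only on the link type by verifying invariance under the (generalized, including virtual) Reidemeister moves. By the standard fact that any two diagrams of the same link $L$ are related by a finite sequence of Reidemeister moves (classical moves R1, R2, R3, together with the virtual moves and the mixed move if we are in the virtual category), it suffices to prove that whenever $D'$ is obtained from $D$ by a single such move, the presentations $\langle x_1,\dots,x_n \mid \dots\rangle$ of $\mathcal{MCB}(D)$ and $\mathcal{MCB}(D')$ present isomorphic mc-biquandles. Since $\mathcal{MCB}(D)$ is by definition a quotient of the free mc-biquandle on the semiarc generators by the crossing relations, I would use a Tietze-transformation style argument: exhibit, for each move, a bijection-up-to-the-relations between the generators of the two presentations and check that each presentation's relations are consequences of the other's.

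First I would set up notation: each move is supported in a disk $B$ meeting $D$ and $D'$ in the same set of endpoints on $\partial B$, and $D, D'$ agree outside $B$. The semiarcs outside $B$ are common to both diagrams and give a common set of generators; the semiarcs inside $B$ differ. The key point (this is exactly the ``unique extension'' principle invoked informally before the theorem) is that for each move, the invertibility axiom (ii) lets us solve the crossing relations inside $B$ to express every interior generator of $D$ (resp.\ $D'$) as an mc-biquandle word in the boundary generators; the maps $\alpha^j_y, \beta^j_y, S^j$ being invertible is precisely what guarantees these words exist and are unique. Then one checks that after eliminating interior generators via Tietze moves, the two presentations have the same generating set (the boundary-incident generators) and that the surviving relations coincide modulo the mc-biquandle axioms. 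Crucially, because crossings never change type under Reidemeister moves (the observation made in the text), each move involves only crossings of a single consistent type-pattern, so the relevant instances of axiom (iii) are exactly the multi-component exchange laws listed, indexed by the allowed triples $(j,k,l)\in\{(s,s,s),(s,m,m),(m,s,m),(m,m,s),(m,m,m)\}$.

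I would organize the move-by-move check as follows. For R1: the single interior crossing is forced to be single-component (both strands belong to one component), and axiom (i) together with the invertibility of $\alpha^s_y,\beta^s_y$ shows the two interior semiarcs are identified with the single passing-through generator, so the presentation is unchanged. For R2: the two crossings are both of the same type $j\in\{s,m\}$, and invertibility of $S^j$ (equivalently of $\alpha^j, \beta^j$ and their ``inverse'' operations encoded in the mc-biquandle words $(\alpha^j)^{-1}, (\beta^j)^{-1}, (S^j)^{-1}$) shows the two interior generators of $D$ are redundant and that removing the R2 crossings changes nothing. For R3: the three crossings realize one of the five allowed type-patterns, and reading off the colors on the bottom of the standard R3 picture in two ways yields exactly the three families of multi-component exchange laws in axiom (iii) for the corresponding triple $(j,k,l)$; conversely those identities are what make the two colorings agree, so the presentations are isomorphic. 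For the virtual moves (detour moves), no classical crossing relations are involved, so $\mathcal{MCB}$ is manifestly unaffected. In every case I would phrase the conclusion as: the identity on the common (exterior) generators extends to a well-defined mc-biquandle homomorphism in each direction, and these are mutually inverse, hence $\mathcal{MCB}(D)\cong\mathcal{MCB}(D')$.

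The main obstacle I anticipate is bookkeeping rather than conceptual: one must carefully track orientations and which of $\utr^j/\otr^j$ (and which of the formal inverses $(\alpha^j_y)^{-1},(\beta^j_y)^{-1},(S^j)^{-1}_i$) appears at each strand of each move, so that the elimination of interior generators really does land in the free mc-biquandle on the exterior generators and the resulting relations match up on the nose modulo axiom (iii). In particular, for R3 one has to check all the positive/negative crossing sign variants and confirm that the same three exchange-law identities (for the given type-triple) suffice for all of them — this is the one place where a genuine, if routine, computation is needed. Because this is exactly parallel to the classical biquandle case (and to the psyquandle case in \cite{NOS}), with the only new feature being the type labels on operations, I would present the R1 and R2 verifications briefly and give the R3 verification in a representative type-pattern (say $(j,k,l)=(m,s,m)$), indicating that the remaining patterns are identical with the corresponding relabeling.
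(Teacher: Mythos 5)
Your proposal follows essentially the same route as the paper's proof: reduce to a single Reidemeister move, observe that the resulting presentations differ only by consequences of the mc-biquandle axioms (with the move type determining which axiom and which type-pattern $(j,k,l)$ is invoked), and conclude via Tietze transformations. The paper states this in three sentences while you supply the move-by-move bookkeeping (including the virtual detour moves), so your write-up is a correct, more detailed version of the same argument.
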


\begin{proof}
We may assume that two diagrams $D$ and $D'$ are obtained from each other by 
a sequence of Reidemeister moves I, II, III. 
Then one sees that the resulting presentations $\mathcal{MCB}(D)$ and $\mathcal{MCB}(D')$ differ only by the condition (i), (ii), or (iii) of Definition 
\ref{Defmcbiqdle} 
corresponding to Reidemeister moves I, II, or III.
Therefore, they are isomorpic by Tietze transformations.
\end{proof}

\begin{definition}
Let $L$ be an oriented link and $D$ be its diagram. 
The \textit{fundamental mc-biquandle} $\mathcal{MCB}(L)$ of $L$ can be defined as $\mathcal{MCB}(D)$.
\end{definition}

%


\begin{example}
Let $R$ be a commutative ring with identity. A choice of units $t^s,t^m,r^s,r^m\in R$
defines an \textit{Alexander mc-biquandle} structure on $R$ by the operations
\[x\utr^l y = t^lx+(r^l-t^l)y\ \mathrm{and}\ x\otr^l y=r^l y\]
provided the equations
\begin{eqnarray*}
(r^j-r^k)(r^l-t^l) & = & 0 \\
(t^k-t^l)(r^j-t^j) & = & 0 \\
(r^j-t^j)(r^l-t^l) & = & (r^l-t^j)(r^k-t^k)
\end{eqnarray*}
are satisfied for all $(j,k,l)\in \{(s,s,s),(s,m,m), (m,s,m),(m,m,s),(m,m,m)\}$,
as the reader may verify from the mc-biquandle axioms.

A similar construction extending Alexander quandles rather than biquandles
can be found in a series of papers by L. Trialdi starting with \cite{T}.
\end{example}

\begin{definition} Let $(X,\utr^s,\otr^s,\utr^m,\otr^m)$ and 
$(Y,\utr^s,\otr^s,\utr^m,\otr^m)$ be mc-biquandles. A map $f:X\to Y$ is 
a \textit{homomorphism} if for all $x,y\in X$, we have
\[f(x\utr^s y)=f(x)\utr^s f(y),\
f(x\otr^s y)=f(x)\otr^s f(y),\
f(x\utr^m y)=f(x)\utr^m f(y),\ \mathrm{and}\
f(x\otr^m y)=f(x)\otr^m f(y).
\]
A self-homomorphism of an mc-biquandle is an \textit{endomorphism}.
\end{definition}

\begin{definition}
Let $L$ be a link and $X$ a finite mc-biquandle. The set of homomorphisms
\[\mathrm{Hom}(\mathcal{MCB}(L),X)=\{f:\mathcal{MCB}(L)\to X\}\]
is called the \textit{mc-biquandle homset} of $L$ with respect to $X$.
We denote its cardinality by 
\[\Phi^{\mathbb{Z}}_X(L)=|\mathrm{Hom}(\mathcal{MCB}(L),X)|.\]
\end{definition}

The mc-biquandle operations are the conditions required so that given
any mc-biquandle coloring of a classical or virtual link before a Reidemeister
move, there is a unique mc-biquandle coloring of the diagram after the move
which agrees with the initial coloring outside the neighborhood of the move.
Indeed, since an mc-biquandle coloring of a diagram determines images for
a generating set for the fundamental mc-biquandle of $L$, it follows that
such colorings uniquely determine homomorphisms $f:\mathcal{MCB}(L)\to X$.
Hence, the abstract homset $\mathrm{Hom}(\mathcal{MCB}(L),X)$ is 
an invariant of oriented links. A diagram $D$ of $L$ gives us a concrete
way to represent elements of the homset as \textit{colorings} of $D$, i.e.
assignments of elements of $X$ to the semiarcs in $D$ satisfying the coloring
rules at the crossings, analogous to the way choosing a basis provides concrete
matrix representations of linear transformations.

\begin{theorem}
The number $\Phi^{\mathbb{Z}}_X(L)$ of colorings of any diagram of $L$
by $X$, is an integer-valued invariant of oriented links known as the
\textit{mc-biquandle counting invariant}.
\end{theorem}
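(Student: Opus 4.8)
The plan is to deduce this from the invariance of the fundamental mc-biquandle $\mathcal{MCB}(L)$ proved above, after first identifying colorings of a diagram with homomorphisms out of $\mathcal{MCB}$. Fix a diagram $D$ of $L$ with semiarcs generating $x_1,\dots,x_n$. By definition an mc-biquandle coloring of $D$ by $X$ is an assignment $x_j\mapsto a_j\in X$ satisfying, at every single-component crossing $c_k$, the relations $r_1^s(c_k)$ and $r_2^s(c_k)$, and at every multi-component crossing $d_{k'}$, the relations $r_1^m(d_{k'})$ and $r_2^m(d_{k'})$. Comparing this with the presentation $\langle x_1,\dots,x_n\mid r_1^s(c_1),\dots,r_2^m(d_{n_m})\rangle$ of $\mathcal{MCB}(D)$, the universal property of a presented mc-biquandle gives a natural bijection between such assignments and homomorphisms $f\colon\mathcal{MCB}(D)\to X$: an assignment of the generators to $X$ extends to a (necessarily unique) homomorphism exactly when it respects the defining relations. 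Thus the set of $X$-colorings of $D$ is canonically identified with $\mathrm{Hom}(\mathcal{MCB}(D),X)$, and the number computed from $D$ is $|\mathrm{Hom}(\mathcal{MCB}(D),X)|$.

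Next I would invoke the theorem that $\mathcal{MCB}(D)\cong\mathcal{MCB}(D')$ whenever $D$ and $D'$ are diagrams of the same oriented link $L$. A choice of isomorphism $\phi\colon\mathcal{MCB}(D)\to\mathcal{MCB}(D')$ induces a bijection $\mathrm{Hom}(\mathcal{MCB}(D'),X)\to\mathrm{Hom}(\mathcal{MCB}(D),X)$ by $f\mapsto f\circ\phi$, so the two homsets have the same cardinality. Combined with the identification of the previous paragraph, the number of $X$-colorings of $D$ equals the number of $X$-colorings of $D'$; since $X$ is finite and $D$ has finitely many semiarcs, this common value is a well-defined nonnegative integer $\Phi^{\mathbb{Z}}_X(L)$ depending only on $L$ and $X$. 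That is the claimed invariant.

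It is worth noting the more hands-on route that in fact underlies the presentation-level argument: one checks move by move that for each Reidemeister move -- and for each admissible assignment of crossing types $s,m$ to the crossings in the local tangle -- every coloring of the semiarcs on the boundary of the move's disk extends uniquely to a coloring of either local picture, so that these boundary-fixing local bijections patch to a global bijection between colorings before and after the move. Axiom (i) handles the lone single-component crossing in a Reidemeister I move; axiom (ii), the invertibility of $\alpha^j_y,\beta^j_y,S^j$ for $j\in\{s,m\}$, handles the two-crossing Reidemeister II moves in both the $s$- and $m$-flavored cases; and axiom (iii), the multi-component exchange laws over the index set $\{(s,s,s),(s,m,m),(m,s,m),(m,m,s),(m,m,m)\}$, handles the Reidemeister III moves, including those with mixed crossing types. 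The main obstacle in this direct approach is exactly the bookkeeping for Reidemeister III with mixed types: one must verify that each of the five allowed type-triples yields matching relations on the two sides and that the combinatorics forcing the type of the third crossing is consistent -- and this is precisely what Definition \ref{Defmcbiqdle}(iii) was designed to encode. In the streamlined proof this is subsumed in the already-established isomorphism $\mathcal{MCB}(D)\cong\mathcal{MCB}(D')$, which is why I would take that as the main line.
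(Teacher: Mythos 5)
Your proposal is correct and follows essentially the same route as the paper, which (in the paragraph preceding the theorem) identifies $X$-colorings of a diagram with elements of $\mathrm{Hom}(\mathcal{MCB}(L),X)$ and appeals to the invariance of the fundamental mc-biquandle, with the move-by-move bijection argument underlying both. Your version just spells out the universal-property and cardinality steps more explicitly than the paper does.
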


\begin{example}
Let $X=\mathbb{Z}_3=\{1,2,3\}$ be the integers modulo 3 with the class of 
zero represented by 3. Then $X$ is an mc-biquandle under the operations
\[\begin{array}{rclrcl}
x\utr^s y & = & 2x+2y & x\utr^m y & = & 2x \\
x\otr^s y & = & x & x\otr^m y & = & 2x. \\
\end{array}
\]
Then the virtual link $L$ with diagram
\[\includegraphics{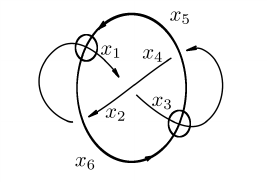}\]
has coloring matrix
\[
\left[\begin{array}{rrrrrr}
2 & 2 & 2 & 0 & 0 & 0  \\
0 & 1 & 0 & 2 & 0 & 0  \\
2 & 2 & 0 & 0 & 0 & 0  \\
0 & 0 & 0 & 0 & 2 & 2  \\
0 & 0 & 0 & 0 & 2 & 2  \\
0 & 0 & 2 & 2 & 0 & 0  \\
\end{array}\right]
\]
which row-reduces over $\mathbb{Z}_3$ to
\[
\left[\begin{array}{rrrrrr}
1 & 1 & 0 & 0 & 0 & 0  \\
0 & 1 & 0 & 2 & 0 & 0  \\
0 & 0 & 1 & 0 & 0 & 0  \\
0 & 0 & 0 & 1 & 0 & 0  \\
0 & 0 & 0 & 0 & 1 & 1  \\
0 & 0 & 0 & 0 & 0 & 0  \\
\end{array}\right]
\]
and hence the homset is isomorphic as a $\mathbb{Z}_3$-module
to $\mathbb{Z}_3$, and we have $\Phi_X^{\mathbb{Z}}(L)=|\mathbb{Z}_3|=3.$
\end{example}

\begin{example}
Our \textit{python} computations show that the links $L4a1$ and $L5a1$ 
have the same counting invariant values for all biquandles with three elements; 
however, the links are distinguished by the three-element mc-biquandle $X$ 
given by the operation tables
\[
\begin{array}{r|rrr}
\utr^s & 1 & 2 & 3 \\ \hline
1 & 2 & 2 & 2 \\
2 & 3 & 3 & 3 \\
3 & 1 & 1 & 1 \\
\end{array}
\quad
\begin{array}{r|rrr}
\otr^s & 1 & 2 & 3 \\ \hline
1 & 2 & 2 & 2 \\
2 & 3 & 3 & 3 \\
3 & 1 & 1 & 1 \\
\end{array}
\quad
\begin{array}{r|rrr}
\utr^m & 1 & 2 & 3 \\ \hline
1 & 1 & 1 & 1 \\
2 & 2 & 2 & 2 \\
3 & 3 & 3 & 3 \\
\end{array}
\quad
\begin{array}{r|rrr}
\otr^m & 1 & 2 & 3 \\ \hline
1 & 2 & 2 & 2 \\
2 & 3 & 3 & 3 \\
3 & 1 & 1 & 1 \\
\end{array}
\]
with $\Phi_X^{\mathbb{Z}}(L4a1)=0$ and $\Phi_X^{\mathbb{Z}}(L5a1)=9$.
\end{example}

\section{\large\textbf{MC-Biquandle Coloring Quivers}}\label{MCBQ}

We now categorify the mc-biquandle counting invariant using a construction
introduced in \cite{CN}.

\begin{definition}
Let $X$ be an mc-biquandle and $L$ an oriented classical or virtual knot or
link diagram. For any
endomorphism $\phi:X\to X$, if $f\in\mathrm{Hom}(\mathcal{MCB}(L),X)$ then
the composition $f\phi$ defines another element of 
$\mathrm{Hom}(\mathcal{MCB}(L),X)$. The \textit{mc-biquandle coloring quiver}
associated to a subset $S\subset\mathrm{End}(X)$ of the set of endomorphisms of
$X$ is the directed graph $\mathcal{Q}_X^S(L)$ with a vertex for each element 
of the homset
$\mathrm{Hom}(\mathcal{MCB}(L),X)$ and an edge from $f$ to $f\phi$ for each 
$\phi\in S$.
\end{definition}

The key insight is that if we have an mc-biquandle coloring of a
diagram and we apply an endomorphism $f:X\to X$ to each of the colors,
the result is another valid coloring:
\[\includegraphics{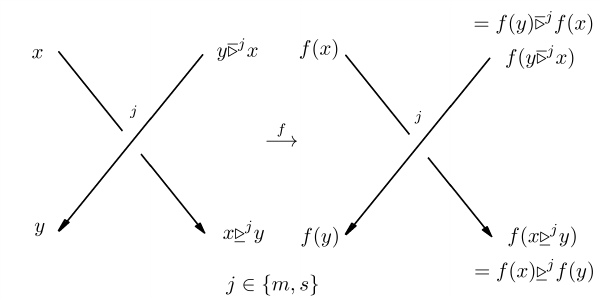}\]
Hence it follows that each endomorphism induces a directed edge from each 
vertex to another (possibly the same) vertex.

\begin{theorem}\label{th2}
Let $X$ be a finite mc-biquandle and $L$ an oriented classical or virtual 
knot or link diagram. Then the directed graph isomorphism type of 
$\mathcal{Q}_X^S(L)$ is an invariant of $L$.
\end{theorem}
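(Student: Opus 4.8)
The plan is to leverage the already-established invariance of the homset $\mathrm{Hom}(\mathcal{MCB}(L),X)$ together with a naturality argument. The key observation is that the quiver $\mathcal{Q}_X^S(L)$ is built entirely from two pieces of data: the vertex set, which is the homset, and the edge set, which is determined by post-composition with endomorphisms in $S$. Since the homset is already known to be an invariant (it depends only on $\mathcal{MCB}(L)$ up to isomorphism by the earlier theorem), what remains is to check that the edges are preserved under the identification of homsets coming from a Reidemeister move.

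First I would recall that a Reidemeister move taking a diagram $D$ to $D'$ induces, by the arguments given before Theorem on the counting invariant, a bijection $\Psi\colon\mathrm{Hom}(\mathcal{MCB}(D),X)\to\mathrm{Hom}(\mathcal{MCB}(D'),X)$: each coloring of $D$ extends uniquely to a coloring of $D'$ agreeing outside the neighborhood of the move. Concretely, this bijection is realized by the isomorphism $\mathcal{MCB}(D)\cong\mathcal{MCB}(D')$ from the fundamental-mc-biquandle invariance theorem (obtained via Tietze transformations); call this isomorphism $\iota\colon\mathcal{MCB}(D')\to\mathcal{MCB}(D)$, so that $\Psi(f)=f\circ\iota$ for $f\in\mathrm{Hom}(\mathcal{MCB}(D),X)$ precomposed appropriately — more precisely, $\Psi(f) = f \circ \iota$ gives a map $\mathcal{MCB}(D')\to X$.

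The central step is then the verification that $\Psi$ intertwines the edge relations. For a fixed $\phi\in S\subset\mathrm{End}(X)$, there is an edge from $f$ to $\phi\circ f$ in $\mathcal{Q}_X^S(D)$. Applying $\Psi$, we must show there is an edge from $\Psi(f)$ to $\Psi(\phi\circ f)$ in $\mathcal{Q}_X^S(D')$, i.e.\ that $\Psi(\phi\circ f)=\phi\circ\Psi(f)$. But $\Psi(\phi\circ f)=(\phi\circ f)\circ\iota=\phi\circ(f\circ\iota)=\phi\circ\Psi(f)$, which is immediate from associativity of composition since $\phi$ acts on the target $X$ while $\iota$ acts on the source. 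Thus $\Psi$ is a graph isomorphism: it is a bijection on vertices carrying edges to edges and non-edges to non-edges, for every choice of $\phi\in S$ simultaneously. Since any two diagrams of $L$ are connected by a finite sequence of Reidemeister moves, composing these isomorphisms shows $\mathcal{Q}_X^S(D)\cong\mathcal{Q}_X^S(D')$ as directed graphs.

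I do not anticipate a genuine obstacle here; the proof is essentially formal once the homset invariance is in hand. The only point requiring a little care is making sure that the bijection $\Psi$ really is induced by post-composition on the abstract level (so that the associativity trick applies), rather than being some ad hoc combinatorial matching of colorings — but this is exactly what the fundamental-mc-biquandle isomorphism from the earlier theorem provides, so the argument goes through cleanly. A secondary bookkeeping point is that $S$ is a fixed subset of $\mathrm{End}(X)$ independent of the diagram, so the labeled/multi-edge structure (one edge per element of $S$) is preserved as well, not merely the underlying graph.
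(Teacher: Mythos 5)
Your proposal is correct and takes essentially the same approach as the paper, whose proof is a one-line observation that $\mathcal{Q}_X^S(L)$ is determined by the homset $\mathrm{Hom}(\mathcal{MCB}(L),X)$, itself invariant under Reidemeister moves. You simply fill in the details the paper leaves implicit: the bijection of homsets is precomposition with the isomorphism of fundamental mc-biquandles, and it commutes with the post-composition by $\phi\in S$ that defines the edges, so edges (with multiplicity, one per $\phi$) are carried to edges.
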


\begin{proof}
The directed graph $\mathcal{Q}_X^S(L)$ is determined by 
$\mathrm{Hom}(\mathcal{MCB}(L),X)$ which is
invariant under Reidemeister moves.
\end{proof}

\begin{example}\label{ex:vwh}
Let $X$ be the mc-biquandle 
structure on the set $\{1,2,3\}$ given by the operation tables
\[
\begin{array}{r|rrr}
\utr^s & 1 & 2 & 3 \\ \hline
1 & 1 & 1 & 1 \\
2 & 2 & 2 & 2 \\
3 & 3 & 3 & 3
\end{array}
\quad
\begin{array}{r|rrr}
\otr^s & 1 & 2 & 3 \\ \hline
1 & 1 & 3 & 1 \\
2 & 2 & 2 & 2 \\
3 & 3 & 1 & 3
\end{array}
\quad
\begin{array}{r|rrr}
\utr^m & 1 & 2 & 3 \\ \hline
1 & 3 & 3 & 3 \\
2 & 2 & 2 & 2 \\
3 & 1 & 1 & 1 \\
\end{array}
\quad
\begin{array}{r|rrr}
\otr^m & 1 & 2 & 3 \\ \hline
1 & 1 & 1 & 1 \\
2 & 2 & 2 & 2 \\
3 & 3 & 3 & 3.
\end{array}
\]
Then we compute that endomorphisms of $X$ include the identity function 
$\phi_1$, the constant function $\phi_2$ sending everything to 2, and the 
bijection $\phi_3$ fixing 2 and switching 1 with 3; let us
denote these endomorphisms by listing the images of the elements in order,
i.e., $\phi_1=[1,2,3]$, $\phi_2=[2,2,2]$ and $\phi_3=[3,2,1]$. 
Then the virtual link 
below has three $X$-colorings; setting $S$ to be the full set of 
endomorphisms, we have full mc-biquandle coloring quiver as shown.
\[\includegraphics{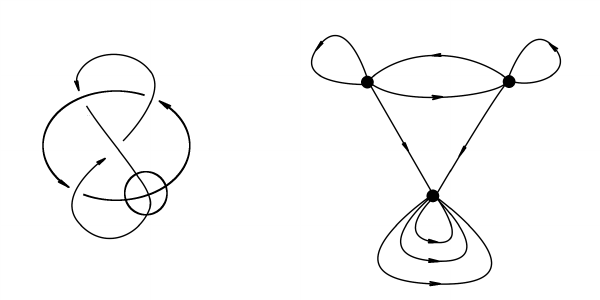}\]
\end{example}

\begin{example}
Let $X$ be the mc-biquandle structure on the set $\{1,2,3,4,5\}$ defined by
the operation tables
\[
\begin{array}{r|rrrrr}
\utr^s & 1 & 2 & 3 & 4 & 5 \\ \hline
1 & 1 & 3 & 2 & 1 & 1 \\
2 & 3 & 2 & 1 & 2 & 2 \\
3 & 2 & 1 & 3 & 3 & 3 \\
4 & 4 & 4 & 4 & 4 & 4 \\
5 & 5 & 5 & 5 & 5 & 5
\end{array}
\quad
\begin{array}{r|rrrrr}
\otr^s & 1 & 2 & 3 & 4 & 5 \\ \hline
1 & 1 & 1 & 1 & 1 & 1 \\
2 & 2 & 2 & 2 & 2 & 2 \\
3 & 3 & 3 & 3 & 3 & 3 \\
4 & 4 & 4 & 4 & 4 & 4 \\
5 & 5 & 5 & 5 & 5 & 5
\end{array}
\quad
\begin{array}{r|rrrrr}
\utr^m & 1 & 2 & 3 & 4 & 5\\ \hline
1 & 1 & 3 & 2 & 1 & 1 \\
2 & 3 & 2 & 1 & 2 & 2 \\
3 & 2 & 1 & 3 & 3 & 3 \\
4 & 5 & 5 & 5 & 5 & 5 \\
5 & 4 & 4 & 4 & 4 & 4 
\end{array}
\quad
\begin{array}{r|rrrrr}
\otr^m & 1 & 2 & 3 & 4 & 5\\ \hline
1 & 1 & 1 & 1 & 1 & 1 \\
2 & 2 & 2 & 2 & 2 & 2 \\
3 & 3 & 3 & 3 & 3 & 3 \\
4 & 4 & 4 & 4 & 4 & 4 \\
5 & 5 & 5 & 5 & 5 & 5
\end{array}
\]
and consider the 2-component virtual links shown:
\[\includegraphics{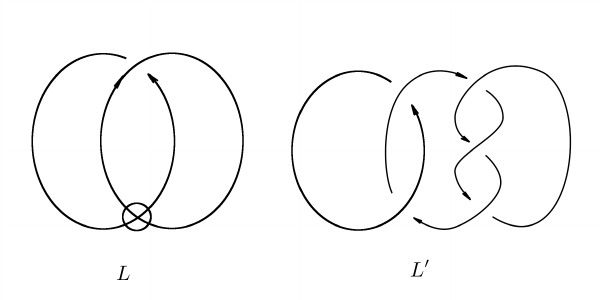}\]
$X$ has 21 endomorphisms, including for example $[3,3,3,4,5]$. 
Then we compute that both links $L$ and $L'$ have $9$ $X$-colorings, but are
distinguished by their quivers $\mathcal{Q}_X^S(L)$ and $\mathcal{Q}_X^S(L')$ :
\[\includegraphics{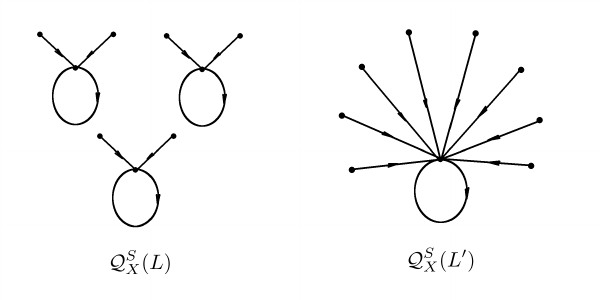}\]
\end{example}

Because comparing quivers directly to determine isomorphism is 
generally computationally expensive, we would like to
extract invariants of the quiver. Since the quiver is unchanged by
Reidemeister moves, any such quiver invariants are automatically 
(classical and virtual) link invariants.
Since the quiver is a categorification of the counting invariant, 
these invariants are decategorifications of the quiver.

We can define a polynomial invariant obtained from a mc-biquandle coloring quiver, as in \cite{CN}.

\begin{definition}
Let $X$ be an mc-biquandle and $S\subset \mathrm{End}(X)$.
We define the \textit{in-degree polynomial} of an oriented link $L$ with respect
to $X$ and $S$ to
be the polynomial
\[\Phi_X^{S,\mathrm{deg}^+}(L)
=\sum_{f\in\mathrm{hom}(\mathcal{MCB}(L),X)} u^{\mathrm{deg}_+(f)}\] 
where $\mathrm{deg}_+(f)$ is the in-degree of the vertex associated to $f$ 
in the mc-biquandle coloring quiver $\mathcal{Q}_X^S(L)$.
\end{definition}

We note that evaluation of the in-degree polynomial at $u=1$ yields
the cardinality of $S$ times the counting invariant.

\begin{theorem}
Let $D$ and $D'$ be diagrams representing the same oriented classical or 
virtual knot or link and let $X$ be a finite mc-biquandle. Then 
\[\Phi_X^{S,\mathrm{deg}^+}(D)=\Phi_X^{S,\mathrm{deg}^+}(D')\]
\end{theorem}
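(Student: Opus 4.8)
The plan is to show that both the vertex set and the edge set of the mc-biquandle coloring quiver are Reidemeister-invariant, so that the in-degree of each vertex is intrinsic to $L$, and hence the sum defining $\Phi_X^{S,\mathrm{deg}^+}$ is unchanged. First I would invoke Theorem \ref{th2}: the directed graph isomorphism type of $\mathcal{Q}_X^S(L)$ is an invariant of $L$. Since the in-degree polynomial is manifestly a function only of the multiset of in-degrees of the vertices of $\mathcal{Q}_X^S(L)$, and this multiset is preserved under any directed graph isomorphism, it follows immediately that $\Phi_X^{S,\mathrm{deg}^+}(D)=\Phi_X^{S,\mathrm{deg}^+}(D')$ whenever $D$ and $D'$ represent the same oriented link.

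To make the argument self-contained I would spell out the two ingredients behind Theorem \ref{th2}. First, the homset $\mathrm{Hom}(\mathcal{MCB}(L),X)$ depends only on $L$ and not on the chosen diagram, because $\mathcal{MCB}(D)\cong\mathcal{MCB}(D')$ by the isomorphism theorem above, and composing with this isomorphism gives a bijection between the two homsets. Second, under this bijection the quiver edges correspond: an edge runs from $f$ to $f\phi$ for each $\phi\in S$, and post-composition with the fixed mc-biquandle isomorphism $\mathcal{MCB}(D)\to\mathcal{MCB}(D')$ intertwines the maps $f\mapsto f\phi$ on both sides, so the induced bijection on vertices is a directed graph isomorphism. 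Consequently the in-degree of the vertex corresponding to a coloring $f$ — namely the number of pairs $(g,\phi)$ with $g\in\mathrm{Hom}(\mathcal{MCB}(L),X)$, $\phi\in S$, and $g\phi=f$ — is the same whether computed from $D$ or from $D'$.

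The only genuinely substantive point, and the one I would present with slightly more care, is the interplay between the isomorphism $\mathcal{MCB}(D)\xrightarrow{\sim}\mathcal{MCB}(D')$ and the endomorphisms in $S$: one must check that relabeling colorings via this isomorphism commutes with right-composition by a fixed $\phi\in\mathrm{End}(X)$. This is a formality — it holds because the isomorphism acts on the source and $\phi$ acts on the target, and these two operations are on opposite sides of the composition — but it is the step that ensures the quiver, not merely the bare homset, transfers correctly. I expect no real obstacle here; the proof is essentially a one-line deduction from Theorem \ref{th2} together with the observation that the in-degree sequence is a directed-graph isomorphism invariant, and I would write it as such, perhaps adding the remark (already noted in the text) that setting $u=1$ recovers $|S|$ times the counting invariant as a sanity check.

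\begin{proof}
By Theorem \ref{th2}, the directed graph isomorphism type of the quiver
$\mathcal{Q}_X^S(L)$ does not depend on the choice of diagram: if $D$ and
$D'$ are diagrams of the same oriented link, then the isomorphism
$\mathcal{MCB}(D)\xrightarrow{\sim}\mathcal{MCB}(D')$ of the isomorphism
theorem induces a bijection $\mathrm{Hom}(\mathcal{MCB}(D),X)\to
\mathrm{Hom}(\mathcal{MCB}(D'),X)$, and since this isomorphism acts on the
source while each $\phi\in S$ acts on the target $X$, right-composition by
$\phi$ commutes with the induced bijection. Hence the bijection on vertex
sets carries the edge from $f$ to $f\phi$ to the edge from the image of $f$
to the image of $f\phi$, i.e. it is an isomorphism of directed graphs
$\mathcal{Q}_X^S(D)\cong\mathcal{Q}_X^S(D')$.

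In particular, for each coloring $f$ of $D$, the in-degree
$\mathrm{deg}_+(f)$ — the number of pairs $(g,\phi)$ with
$g\in\mathrm{Hom}(\mathcal{MCB}(D),X)$, $\phi\in S$ and $g\phi=f$ — equals
the in-degree of the corresponding vertex of $\mathcal{Q}_X^S(D')$. Thus the
multiset $\{\mathrm{deg}_+(f) : f\in\mathrm{Hom}(\mathcal{MCB}(D),X)\}$
coincides with the analogous multiset for $D'$, and therefore
\[
\Phi_X^{S,\mathrm{deg}^+}(D)
=\sum_{f} u^{\mathrm{deg}_+(f)}
=\sum_{f'} u^{\mathrm{deg}_+(f')}
=\Phi_X^{S,\mathrm{deg}^+}(D'),
\]
as claimed. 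As a consistency check, evaluating at $u=1$ gives
$|S|\cdot\Phi_X^{\mathbb{Z}}(L)$ on both sides.
\end{proof}
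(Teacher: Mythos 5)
Your proof is correct and takes the same route as the paper, which simply cites Theorem \ref{th2}; your version just fills in the routine details of why a directed-graph isomorphism preserves the multiset of in-degrees and hence the polynomial. Nothing further is needed.
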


\begin{proof}
This follows immediately from Theorem \ref{th2}.
\end{proof}

\begin{example}
The virtual link in Example \ref{ex:vwh} above has in-degree polynomial
$2u^2+u^5$ with respect to the mc-biquandle $X$ and full set $S$ of 
endomorphisms in Example \ref{ex:vwh}.
\end{example}

\begin{example}
Let $X$ be the mc-biquandle structure on the set $\{1,2,3,4\}$ given by the
operation tables
\[
\begin{array}{r|rrrr}
\utr^s & 1 & 2 & 3 & 4 \\ \hline
1 & 2 & 2 & 1 & 1 \\
2 & 1 & 1 & 2 & 2 \\
3 & 4 & 4 & 3 & 3 \\
4 & 3 & 3 & 4 & 4 
\end{array}\quad
\begin{array}{r|rrrr}
\otr^s & 1 & 2 & 3 & 4 \\ \hline
1 & 2 & 2 & 2 & 2 \\
2 & 1 & 1 & 1 & 1 \\
3 & 3 & 3 & 3 & 3 \\
4 & 4 & 4 & 4 & 4
\end{array}\quad
\begin{array}{r|rrrr}
\utr^m & 1 & 2 & 3 & 4 \\ \hline
1 & 1 & 1 & 2 & 2 \\
2 & 2 & 2 & 1 & 1 \\
3 & 4 & 4 & 3 & 3 \\
4 & 3 & 3 & 4 & 4
\end{array}\quad
\begin{array}{r|rrrr}
\otr^m & 1 & 2 & 3 & 4 \\ \hline
1 & 1 & 1 & 1 & 1 \\
2 & 2 & 2 & 2 & 2 \\
3 & 3 & 3 & 3 & 3 \\
4 & 4 & 4 & 4 & 4
\end{array};
\]
we compute that the endomorphisms of $X$ are
\[\{[1, 2, 3, 4], [1, 2, 4, 3], [2, 1, 3, 4], [2, 1, 4, 3], 
[3, 3, 3, 3], [3, 3, 4, 4], [4, 4, 3, 3], [4, 4, 4, 4]\}\]
where we express an endomorphism $f:X\to X$ in the form $[f(1),\dots, f(n)]$.

Using our custom \texttt{python} code, we computed the in-degree polynomial
values for the prime classical links with up to 7 crossings as listed in
the knot atlas \cite{KA}:
\[
\begin{array}{r|l}
\Phi_X^{S,\mathrm{deg}_+}(L) & L \\\hline
2u^{20}+6u^4 & L2a1, L6a2, L6a3, L7a5, L7a6 \\
12u^4 + 2u^{12} + 2u^{28} & L4a1, L5a1, L6a1, L7a1, L7a2, 
L7a3, L7a4, L7n1, L7n2 \\
14u^4+2u^{36} & L6a5, L6n1, L7a7 \\
56u^4+6u^{20}+2u^{84} & L6a4 \\
\end{array}
\]
In particular, we observe that this example shows that this enhancement is 
proper, i.e., can distinguish links with the same counting invariant.
\end{example}

\section{\large\textbf{Questions}}\label{Q}

We conclude with a few questions and direction for future research.

In a future paper we will extend the notion of biquandle brackets to the case 
of mc-biquandles analogously to Kaestner brackets \cite{KN}. Extending
biquandle homology to the case of mc-biquandles is also a direction of 
interest. What other enhancements of the mc-biquandle counting invariant 
can be defined? What other decategorifications of the quiver can be found?

\bibliography{sc-sn3}{}
\bibliographystyle{abbrv}

\noindent
\textsc{Nonlinear Dynamics and Mathematical Application Center and \\
Department of Mathematics, \\
Kyungpook National University \\
Daegu, 41566, Republic of Korea} 

\bigskip

\noindent
\textsc{Department of Mathematical Sciences \\
Claremont McKenna College \\
850 Columbia Ave. \\
Claremont, CA 91711}

\end{document}